\documentclass[11pt]{article}

\usepackage[a4paper,margin=1in]{geometry}
\usepackage{amsmath,amssymb,amsthm}
\usepackage{mathtools}
\usepackage{graphicx}
\usepackage[hidelinks]{hyperref}
\usepackage{placeins}
\usepackage{float}
\usepackage{enumitem}
\usepackage{booktabs}

\newtheorem{theorem}{Theorem}
\newtheorem{lemma}{Lemma}
\newtheorem{definition}{Definition}
\newtheorem{axiom}{Axiom}
\newtheorem{corollary}{Corollary}
\newtheorem{proposition}{Proposition}

\newcommand{\R}{\mathbb{R}}
\newcommand{\phiG}{\varphi} % golden ratio symbol
\newcommand{\ellE}{\ell}    % edge length
\newcommand{\midpt}{\mathrm{mid}}
\newcommand{\norm}[1]{\left\lVert #1\right\rVert}
\newcommand{\ang}{\angle}

\title{A Ten-Face Non-Edge-Sharing Wing Set on the Regular\\
Icosahedron and a Decagonal Equatorial Balance}
\author{YoungJune Jeon \\ GeoWind}
\date{\today}

\begin{document}
\maketitle

\begin{abstract}
We formalize a ten-face triangular ``wing'' set on a regular icosahedron under a vertex labeling
$\{N,S,U_1,\dots,U_5,L_1,\dots,L_5\}$ with rotation axis $NS$.
The wing faces satisfy: (i) each face is an isosceles $36^\circ\!-\!36^\circ\!-\!108^\circ$ triangle
with a $36^\circ$ angle anchored at a pole ($N$ or $S$); (ii) distinct faces may share vertices but
share no edges; and (iii) a natural equatorial cross-section yields a perfectly balanced regular decagon.
We derive a closed form for the decagon radius,
$R=\frac{\phiG}{2}\ellE$, where $\ellE$ is the icosahedron edge length and
$\phiG=\frac{1+\sqrt5}{2}$ is the golden ratio.
Beyond the geometric results, we interpret the ``ten''-face closure as a symmetry-consistent design principle
for a pole-anchored wing layout and provide a reproducible construction workflow.
\end{abstract}

\section{Introduction}
Regular polyhedra provide highly constrained geometric structures where design rules can be stated
as axioms and verified as theorems.
This paper defines and analyzes a specific triangular face set motivated by a GeoWind wing rule-set.
Our first goal is mathematical: a precise definition and closed-form consequences for angles, symmetry,
and a cross-sectional balance.
Our second goal is design-facing: to express the same structure as a reproducible, checkable rule-set
suitable for implementation in CAD/parametric geometry.

At a high level, the construction selects ten ``golden'' isosceles triangles (angles $36^\circ$--$36^\circ$--$108^\circ$)
whose small angle is anchored at one of two poles on the rotation axis, and whose edges are pairwise disjoint
(edge non-sharing). When all ten faces are present, their pole-opposite edges induce a regular decagon on the
equatorial plane with a radius determined by the golden ratio.

\section{Preliminaries}
Let $\phiG=\frac{1+\sqrt5}{2}$ denote the golden ratio, satisfying $\phiG^2=\phiG+1$.
Let $I$ be a regular icosahedron with edge length $\ellE$ and center $O$.
Choose opposite vertices $N$ and $S$ and call the line $NS$ the rotation axis.

\subsection{A standard coordinate model}
For proofs that require explicit distance computations, we use the standard coordinate model of the icosahedron
with edge length $2$ whose vertices are
\begin{equation}\label{eq:std-icosa}
(0,\pm1,\pm\phiG),\quad (\pm1,\pm\phiG,0),\quad (\pm\phiG,0,\pm1).
\end{equation}
Scaling by $\ellE/2$ converts this model to edge length $\ellE$.

\section{Labeling and Wing-Face Axioms}

\begin{definition}[Vertex labeling]\label{def:label}
Label the 12 vertices of the icosahedron as
\[
V=\{N,S,U_1,\dots,U_5,L_1,\dots,L_5\},
\]
where indices are taken modulo $5$.
The axis is the line $NS$.
\end{definition}

Figure~\ref{fig:labels} shows the labeled icosahedron with the rotation axis $NS$ and the two vertex rings.

\begin{figure}[H]
  \centering
  \includegraphics[width=0.6\linewidth]{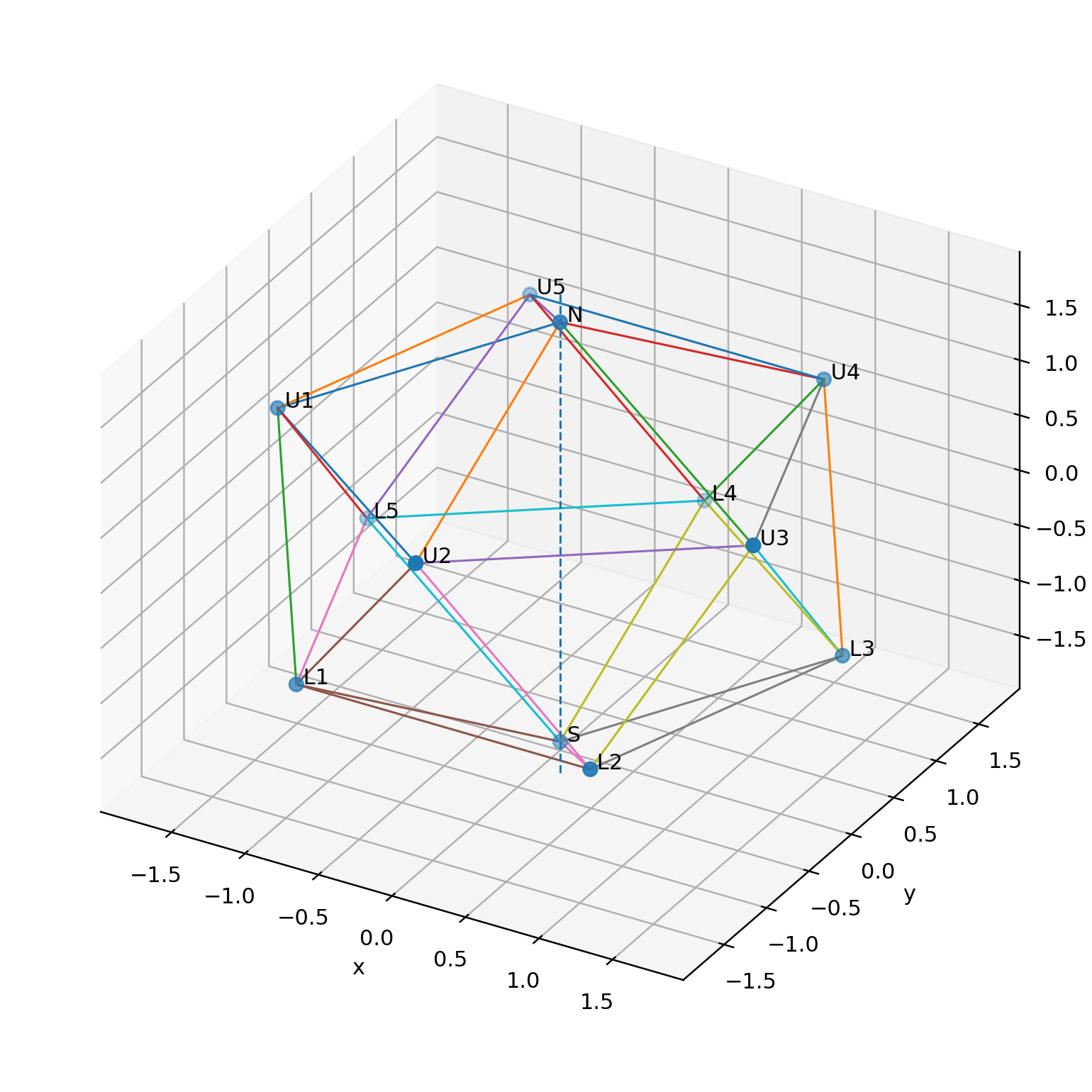}
  \caption{Labeled regular icosahedron used throughout the paper. The rotation axis is the line
  $NS$ (aligned with the $z$-axis). The upper-ring vertices are labeled $U_1,\dots,U_5$ and the lower-ring
  vertices $L_1,\dots,L_5$.}
  \label{fig:labels}
\end{figure}

\begin{definition}[GeoWind ten-face wing set]\label{def:wingset}
Define the wing-face set $\mathcal{F}$ as the following ten triangles:
\begin{align*}
F_S(i) &= \triangle(S,U_i,L_i), && i=1,\dots,5,\\
F_N(i) &= \triangle(N,U_i,L_{i-1}), && i=1,\dots,5,
\end{align*}
with $L_0:=L_5$.
\end{definition}

\begin{axiom}[Edge non-sharing; vertex sharing allowed]\label{ax:edge}
For any two distinct faces $F_a\neq F_b\in\mathcal{F}$,
\[
E(F_a)\cap E(F_b)=\varnothing,
\]
i.e., no edge is shared between two faces. Vertex sharing is allowed.
\end{axiom}

\begin{axiom}[Non-intersection (geometric layout)]\label{ax:nonintersect}
The interiors of distinct faces do not intersect:
for $F_a\neq F_b\in\mathcal{F}$,
\[
\mathrm{int}(F_a)\cap \mathrm{int}(F_b)=\varnothing.
\]
\end{axiom}

The ten-face wing set $\mathcal{F}$ embedded in the labeled icosahedron is illustrated in Figure~\ref{fig:tenfaces}.

\begin{figure}[H]
  \centering
  \includegraphics[width=0.6\linewidth]{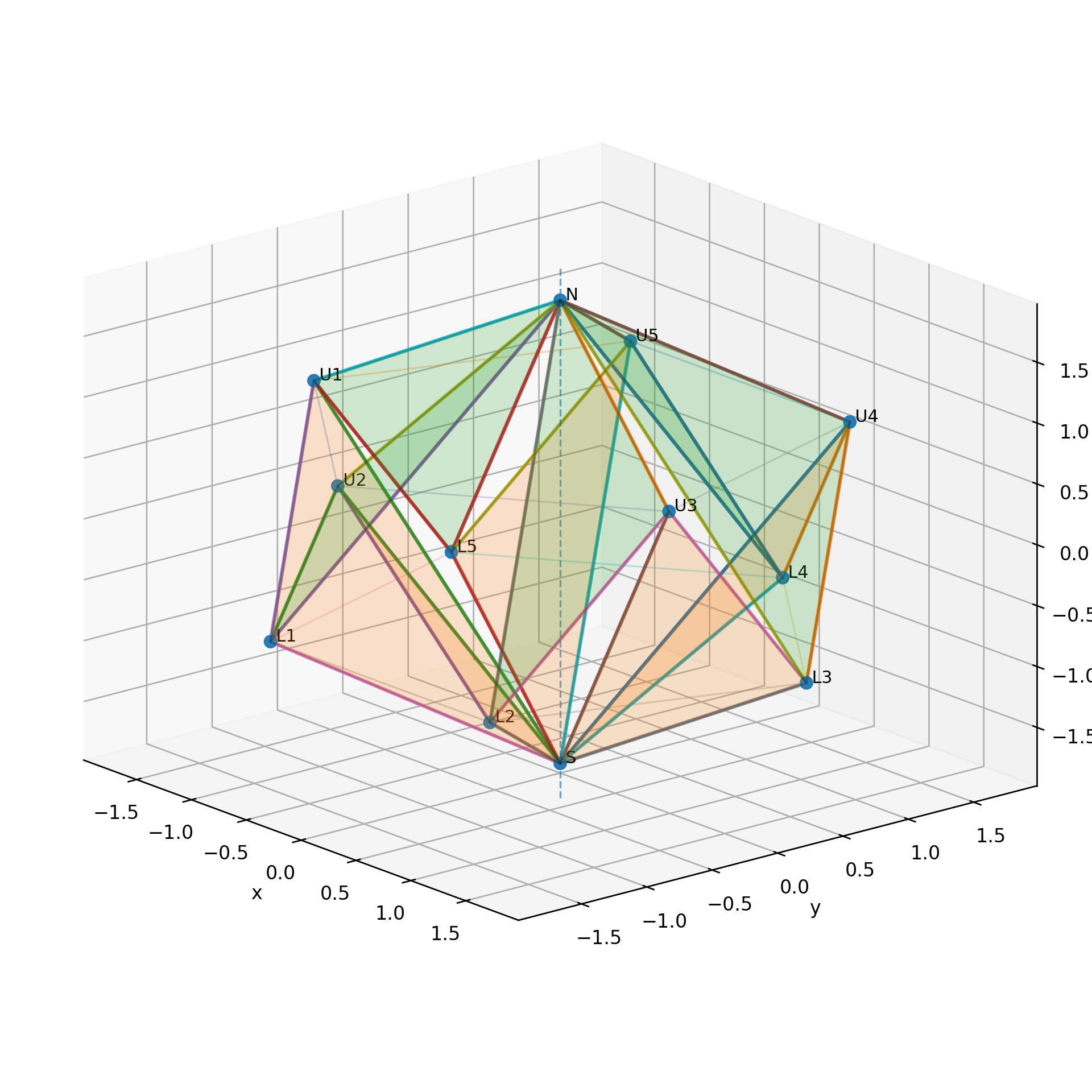}
  \caption{The GeoWind ten-face wing set $\mathcal{F}$ embedded in the labeled icosahedron.
  South-anchored faces are $F_S(i)=\triangle(S,U_i,L_i)$ and north-anchored faces are
  $F_N(i)=\triangle(N,U_i,L_{i-1})$ (indices modulo 5).
  Faces are shown translucent; vertex sharing is allowed, but no two faces share an edge.}
  \label{fig:tenfaces}
\end{figure}

\FloatBarrier

\section{Main Results}

\subsection{Angle structure via a golden-distance lemma}

\begin{lemma}[Edge and diagonal distances in the standard model]\label{lem:dist}
In the standard coordinate model \eqref{eq:std-icosa} (edge length $2$),
(i) adjacent vertices have distance $2$, and
(ii) for an appropriate choice of opposite poles $N,S$ and a corresponding upper vertex $U$,
the distance between a pole and an adjacent upper vertex is $2$, whereas the distance between the opposite pole and that upper vertex equals $2\phiG$.
Equivalently, after scaling to edge length $\ellE$, the corresponding distances are $\ellE$ and $\phiG \ellE$.
\end{lemma}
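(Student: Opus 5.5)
The plan is a direct computation in the coordinate model \eqref{eq:std-icosa}, reduced to a few explicit vertex pairs by symmetry. All of it rests on the single identity $\phiG^2=\phiG+1$. For (i), I will first list the squared distances between pairs of the twelve vertices. Under the sign changes $x\mapsto -x$, $y\mapsto -y$, $z\mapsto -z$ and the cyclic permutation $(x,y,z)\mapsto(y,z,x)$, all of which preserve the vertex set, every pair is equivalent to one containing the fixed vertex $A=(0,1,\phiG)$. So it suffices to compute $\norm{A-B}^2$ for the eleven other vertices $B$.

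Two sample computations show the pattern. For $B=(0,-1,\phiG)$ we get $\norm{A-B}^2=4$. For $B=(1,\phiG,0)$ we get
\[
\norm{A-B}^2=1+(\phiG-1)^2+\phiG^2=2\phiG^2-2\phiG+2=4,
\]
using $\phiG^2=\phiG+1$. Carrying this out for all eleven $B$, I expect exactly three values: $4$ for five vertices, $4\phiG^2$ for five vertices, and $4(1+\phiG^2)$ for the antipode $-A$. The smallest nonzero distance is therefore $2$, attained by exactly five neighbours of each vertex. That matches the degree-$5$ adjacency of the icosahedron, so ``adjacent'' (joined by an edge) means exactly ``at distance $2$,'' which proves (i).

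For (ii), I will choose the poles $N=A=(0,1,\phiG)$ and $S=-A=(0,-1,-\phiG)$. This pair is opposite, since central inversion is a symmetry of \eqref{eq:std-icosa} and $-A$ is the unique vertex at the largest distance from $A$. For the upper vertex I take $U=(0,-1,\phiG)$. Then $\norm{N-U}=2$, and $S-U=(0,0,-2\phiG)$ gives $\norm{S-U}=2\phiG$ immediately.

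To cover every upper-ring vertex, not just this one, I will use two facts. First, the five neighbours of $N$ are exactly the vertices at distance $2$ from $N$, by the list above. Second, the antipodal map $X\mapsto -X$ carries each neighbour $U$ of $N$ to a neighbour of $S$. This gives
\[
\norm{S-U}=\norm{N-(-U)},
\]
and $-U$ is neither $N$'s neighbour nor $N$'s antipode. By the distance list, $\norm{S-U}$ must then equal $2\phiG$.

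The last step is scaling. The homothety with factor $\ellE/2$ multiplies every distance by $\ellE/2$, which turns $2$ and $2\phiG$ into $\ellE$ and $\phiG\ellE$.

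The only step needing care is the claim that the distance spectrum from a vertex is exactly $\{2,2\phiG,2\sqrt{1+\phiG^2}\}$ with multiplicities $5,5,1$. This is what justifies identifying adjacency with distance $2$ and pins down $\norm{S-U}$ for every upper vertex. If the symmetry reduction seems insufficient, I will instead enumerate all eleven differences from $A$ explicitly, which is a finite and routine check.
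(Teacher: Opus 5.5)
Your proof is correct and follows the same route as the paper: a direct computation in the standard model with $N=(0,1,\phiG)$ and $S=-N$, everything reducing to $\phiG^2=\phiG+1$. The differences are supplementary rather than structural. You take $U=(0,-1,\phiG)$ so that $S-U=(0,0,-2\phiG)$ is immediate, where the paper uses $U=(1,\phiG,0)$ and expands $\norm{S-U}^2$. Your full distance spectrum and the antipodal identity $\norm{S-U}=\norm{N-(-U)}$ also extend the result to every upper vertex, which the paper leaves to ``symmetry.''
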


\begin{proof}
For (i), one may verify adjacency by direct computation for a representative pair.
Let $A=(0,1,\phiG)$ and $B=(1,\phiG,0)$; then
\[
\norm{A-B}^2=(0-1)^2+(1-\phiG)^2+(\phiG-0)^2
=1+(1-2\phiG+\phiG^2)+\phiG^2.
\]
Using $\phiG^2=\phiG+1$, the expression becomes
\[
1+(1-2\phiG+\phiG+1)+(\phiG+1)=4,
\]
hence $\norm{A-B}=2$. Similar checks generate the full edge set by symmetry.

For (ii), take poles $N=(0,1,\phiG)$ and $S=(0,-1,-\phiG)$ (these are opposite in \eqref{eq:std-icosa}).
Let $U=(1,\phiG,0)$, which is adjacent to $N$ by (i). Then
\[
\norm{N-U}=2,\qquad
\norm{S-U}^2=(0-1)^2+(-1-\phiG)^2+(-\phiG-0)^2.
\]
Compute
\[
\norm{S-U}^2=1+(1+2\phiG+\phiG^2)+\phiG^2
=2+2\phiG+2\phiG^2.
\]
Substituting $\phiG^2=\phiG+1$ gives
\[
\norm{S-U}^2=2+2\phiG+2(\phiG+1)=4+4\phiG=4\phiG^2,
\]
so $\norm{S-U}=2\phiG$.
Scaling the coordinate model by factor $\ellE/2$ scales all distances by $\ellE/2$, yielding $\ellE$ and $\phiG\ellE$.
\end{proof}

\begin{lemma}[Cosine-law characterization of the golden gnomon]\label{lem:gnomon}
Let a triangle have side lengths $(\ellE,\ellE,\phiG\ellE)$. Then its angles are $(36^\circ,36^\circ,108^\circ)$.
\end{lemma}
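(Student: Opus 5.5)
We apply the law of cosines to each angle of the triangle with sides $(\ellE,\ellE,\phiG\ellE)$ and then identify the resulting cosines as those of $36^\circ$ and $108^\circ$ using $\phiG^2=\phiG+1$.

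\textbf{Step 1 (apex angle).} Let $\theta$ be the angle opposite the long side $\phiG\ellE$. The law of cosines gives
\[
\cos\theta=\frac{\ellE^2+\ellE^2-\phiG^2\ellE^2}{2\ellE^2}=\frac{2-\phiG^2}{2}=\frac{1-\phiG}{2},
\]
where the last equality uses $\phiG^2=\phiG+1$. Since $\phiG=\frac{1+\sqrt5}{2}$, this equals $\frac{1-\sqrt5}{4}$.

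\textbf{Step 2 (base angles).} Let $\alpha$ be an angle opposite a side of length $\ellE$. The law of cosines gives
\[
\cos\alpha=\frac{\ellE^2+\phiG^2\ellE^2-\ellE^2}{2\phiG\ellE^2}=\frac{\phiG}{2}=\frac{1+\sqrt5}{4}.
\]
Because the triangle is isosceles, both base angles equal $\alpha$, so it suffices to find $\alpha$.

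\textbf{Step 3 (identify the angles).} The main step is to show $\cos 36^\circ=\frac{1+\sqrt5}{4}$. Set $c=\cos36^\circ$. Since $\cos 108^\circ=-\cos72^\circ$ and $\cos72^\circ=2c^2-1$, we have $\cos 72^\circ=\cos(180^\circ-108^\circ)$. To get a second relation, use $2\cdot 36^\circ+108^\circ=180^\circ$, or equivalently the identity $\cos36^\circ-\cos72^\circ=\tfrac12$. That identity follows from multiplying the left side by $2\sin 36^\circ$ and applying product-to-sum formulas. Substituting $\cos72^\circ=2c^2-1$ gives
\[
4c^2-2c-1=0.
\]
Taking the positive root yields $c=\frac{1+\sqrt5}{4}=\phiG/2$. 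Then $\cos108^\circ=-\cos72^\circ=1-2c^2$, which simplifies to $\frac{1-\sqrt5}{4}$.

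\textbf{Step 4 (conclude).} Cosine is injective on $(0^\circ,180^\circ)$, so Steps 1–3 give $\alpha=36^\circ$ and $\theta=108^\circ$. As a consistency check, $36^\circ+36^\circ+108^\circ=180^\circ$. The only nonroutine ingredient is the exact value of $\cos36^\circ$; if needed, it can alternatively be cited as the standard regular-pentagon fact.
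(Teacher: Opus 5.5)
Your proof is correct and follows essentially the same route as the paper: the law of cosines on the apex angle, with $\phiG^2=\phiG+1$, gives $\cos\theta=(1-\phiG)/2$, and the rest is identifying standard cosine values. The paper takes $(1-\phiG)/2=-\cos72^\circ$ as known and gets the base angles from the angle sum, whereas you compute $\cos\alpha=\phiG/2$ directly and derive $\cos36^\circ=\phiG/2$ from scratch, which makes your version more self-contained (the Step 3 clause concluding $\cos72^\circ=\cos(180^\circ-108^\circ)$ is vacuous and can be dropped).
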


\begin{proof}
Let the equal sides have length $\ellE$, and let the base have length $\phiG\ellE$.
By the law of cosines, the angle $\theta$ opposite the base satisfies
\[
\cos\theta=\frac{\ellE^2+\ellE^2-(\phiG\ellE)^2}{2\ellE\cdot\ellE}=\frac{2-\phiG^2}{2}.
\]
Using $\phiG^2=\phiG+1$ gives
\[
\cos\theta=\frac{2-(\phiG+1)}{2}=\frac{1-\phiG}{2}=-\cos 72^\circ,
\]
hence $\theta=108^\circ$. The remaining two angles are equal and sum to $72^\circ$, so each is $36^\circ$.
\end{proof}

\begin{theorem}[Golden-triangle shape: $36^\circ\!-\!36^\circ\!-\!108^\circ$]\label{thm:golden}
Every face $F\in\mathcal{F}$ is an isosceles triangle with interior angles $(36^\circ,36^\circ,108^\circ)$.
Moreover, for $F_S(i)$ a $36^\circ$ angle occurs at the pole $S$, and for $F_N(i)$ a $36^\circ$ angle occurs at $N$.
\end{theorem}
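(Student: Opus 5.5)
The plan is to reduce every face of $\mathcal{F}$ to a triangle with side lengths $(\ellE,\ellE,\phiG\ellE)$ and then apply Lemma~\ref{lem:gnomon}. The $36^\circ$ angles sit at the two endpoints of the long side, so the pole claim follows once the long side is shown to contain the pole.

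First I will make the labeling of Definition~\ref{def:label} precise, since the statement depends on it. The convention is:
\begin{itemize}[nosep]
\item $U_1,\dots,U_5$ are the five neighbours of $N$, in cyclic order;
\item $L_1,\dots,L_5$ are the five neighbours of $S$;
\item the lower ring is indexed so that $U_i$ is adjacent to both $L_{i-1}$ and $L_i$.
\end{itemize}
This is the usual antiprism structure of the equatorial band, and it is the reading under which Definition~\ref{def:wingset} uses genuine icosahedron edges $U_iL_i$ and $U_iL_{i-1}$. I would verify this convention once in the coordinate model \eqref{eq:std-icosa} with $N=(0,1,\phiG)$, listing the five neighbours of $N$ and of $S=-N$ and checking the adjacency pattern by distances, as in Lemma~\ref{lem:dist}(i).

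Next I need the distances from each pole to the ring vertices not adjacent to it. Lemma~\ref{lem:dist}(ii) gives $\norm{S-U}=2\phiG$ for one upper vertex $U$. To extend this to all $U_i$, I will use the rotation of order $5$ about the axis $NS$, which is a symmetry of $I$. It fixes $N$ and $S$ and permutes the $U_i$ cyclically, so $\norm{S-U_i}=\phiG\ellE$ for every $i$ after scaling. For the lower ring I will use the central inversion $x\mapsto -x$, which swaps $N\leftrightarrow S$ and maps the upper ring onto the lower ring. It therefore gives $\norm{N-L_j}=\phiG\ellE$ for every $j$.

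With these distances the side lengths of each face are as follows.
\begin{itemize}[nosep]
\item For $F_S(i)=\triangle(S,U_i,L_i)$: $\norm{S-L_i}=\ellE$ and $\norm{U_i-L_i}=\ellE$ are edges, while $\norm{S-U_i}=\phiG\ellE$. Lemma~\ref{lem:gnomon} then gives angles $(36^\circ,36^\circ,108^\circ)$. The $108^\circ$ angle is at $L_i$, opposite the long side, so the angle at $S$ is $36^\circ$.
\item For $F_N(i)=\triangle(N,U_i,L_{i-1})$: $\norm{N-U_i}=\ellE$ and $\norm{U_i-L_{i-1}}=\ellE$, while $\norm{N-L_{i-1}}=\phiG\ellE$. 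The $108^\circ$ angle is at $U_i$, and $N$ carries a $36^\circ$ angle.
\end{itemize}
Isoscelesness is immediate from the two equal sides.

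The main obstacle is the labeling convention, which the paper leaves implicit. In particular, it must be true that $U_i$ is adjacent to both $L_i$ and $L_{i-1}$, and not, for example, to $L_i$ and $L_{i+1}$. If the index shift is wrong, $U_iL_{i-1}$ is not an edge and the side lengths change. I would settle this by fixing the convention explicitly at the start of the proof, justified by the antiprism structure: each upper vertex has exactly two lower neighbours, which are cyclically consecutive. After that, the argument is symmetry plus the two lemmas.
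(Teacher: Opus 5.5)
Your proposal is correct and follows the paper's proof. Both arguments get the side lengths $(\ellE,\ellE,\phiG\ellE)$ from the adjacencies $SL_i$, $U_iL_i$, $NU_i$, $U_iL_{i-1}$ together with Lemma~\ref{lem:dist}, and then apply Lemma~\ref{lem:gnomon}; your explicit labeling convention, the rotation and central-inversion step extending Lemma~\ref{lem:dist} to every $U_i$ and to $|NL_{i-1}|$, and locating the $108^\circ$ vertex opposite the long side just spell out what the paper's proof leaves implicit.
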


\begin{proof}
Fix $i$ and consider $F_S(i)=\triangle(S,U_i,L_i)$.
In the regular icosahedron, $S$ is adjacent to $L_i$ and $U_i$ is adjacent to $L_i$, so
\[
|SL_i|=\ellE,\qquad |U_iL_i|=\ellE.
\]
By Lemma~\ref{lem:dist}, the distance from $S$ to the corresponding upper vertex is $\phiG\ellE$, so
\[
|SU_i|=\phiG\ellE.
\]
Thus $F_S(i)$ has side lengths $(\ellE,\ellE,\phiG\ellE)$, and by Lemma~\ref{lem:gnomon} its angles are
$(36^\circ,36^\circ,108^\circ)$ with a $36^\circ$ angle at $S$.

For $F_N(i)=\triangle(N,U_i,L_{i-1})$, $N$ is adjacent to $U_i$ and $U_i$ is adjacent to $L_{i-1}$, hence
\[
|NU_i|=\ellE,\qquad |U_iL_{i-1}|=\ellE.
\]
Again by Lemma~\ref{lem:dist}, the remaining side has length $\phiG\ellE$, so the same angle conclusion follows,
with a $36^\circ$ angle anchored at $N$.
\end{proof}

\subsection{Edge non-sharing and maximality}

\begin{theorem}[No shared edges within the ten faces]\label{thm:noedge}
The ten faces in $\mathcal{F}$ satisfy Axiom~\ref{ax:edge}: no two distinct faces share an edge.
\end{theorem}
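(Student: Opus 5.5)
The plan is a direct combinatorial check: list the three edges of each face explicitly and show the resulting thirty unordered vertex pairs are pairwise distinct. Here an ``edge'' of a face means a side of the triangle, i.e.\ an unordered vertex pair, not necessarily an edge of $I$. From Definition~\ref{def:wingset} the edge sets are
\[
E(F_S(i))=\bigl\{\{S,U_i\},\{S,L_i\},\{U_i,L_i\}\bigr\},\qquad
E(F_N(i))=\bigl\{\{N,U_i\},\{N,L_{i-1}\},\{U_i,L_{i-1}\}\bigr\}.
\]
I will sort these thirty pairs by type and compare only pairs of the same type, since pairs with different vertex types are unequal automatically. The types are: pole--upper, pole--lower, and upper--lower.

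For the pole--upper and pole--lower pairs I separate by which pole occurs. Every edge containing $S$ comes from some $F_S(i)$, and every edge containing $N$ comes from some $F_N(i)$. No edge of any face contains both $N$ and $S$, so an $S$-face and an $N$-face can never share a pole-containing edge.

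\begin{itemize}
\item Among the $S$-faces, the edges $\{S,U_i\}$ and $\{S,L_i\}$ determine $i$ uniquely, because the $U_i$ are distinct vertices for different $i$, and likewise the $L_i$.
\item The same holds among the $N$-faces: $\{N,U_i\}$ determines $i$, and $\{N,L_{i-1}\}$ determines $i-1$, hence $i$, modulo $5$.
\end{itemize}

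The remaining case, which I expect to be the only one needing care, is the upper--lower edges. These are $\{U_i,L_i\}$ from $F_S(i)$ and $\{U_j,L_{j-1}\}$ from $F_N(j)$.

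\begin{itemize}
\item Two $S$-type edges coincide only if $i=i'$, and two $N$-type edges coincide only if $j=j'$.
\item A cross coincidence $\{U_i,L_i\}=\{U_j,L_{j-1}\}$ forces $U_i=U_j$, so $i=j$. It then forces $L_i=L_{i-1}$, i.e.\ $i\equiv i-1 \pmod 5$, which is impossible.
\end{itemize}

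Hence the thirty pairs are distinct and $E(F_a)\cap E(F_b)=\varnothing$ for $F_a\neq F_b$. The subtle point is that the index shift $L_{i-1}$ in the north faces is exactly what prevents collision with $\{U_i,L_i\}$. I will remark on this, noting that the argument uses only the labeling (distinctness of the twelve labels), not the metric structure, so no appeal to Lemma~\ref{lem:dist} is needed.
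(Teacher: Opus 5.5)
Your proof is correct and follows essentially the same route as the paper: list the three sides of each $F_S(i)$ and $F_N(i)$ as unordered vertex pairs, separate the pole-containing sides by which pole they contain, and rule out the only nontrivial coincidence $\{U_i,L_i\}=\{U_j,L_{j-1}\}$ via $i=j$ and $i\equiv i-1 \pmod 5$. Sorting by vertex type makes explicit the mixed comparisons (e.g.\ an $S$-side against a side $U_jL_{j-1}$) that the paper leaves implicit, but the argument is the same.
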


\begin{proof}
Each south face $F_S(i)$ has edge set $\{SU_i,\,SL_i,\,U_iL_i\}$.
For $i\neq j$ these edges have different endpoints and thus cannot coincide.
Each north face $F_N(i)$ has edge set $\{NU_i,\,NL_{i-1},\,U_iL_{i-1}\}$, and the same endpoint argument applies.

It remains to compare a south edge with a north edge.
Any south edge incident to $S$ cannot equal a north edge incident to $N$.
A south edge of type $U_iL_i$ cannot equal a north edge of type $U_jL_{j-1}$ because equality would require
$U_i=U_j$ and $L_i=L_{j-1}$, hence $i=j=j-1 \ (\mathrm{mod}\ 5)$, impossible.
Therefore no edges are shared.
\end{proof}

\begin{theorem}[Maximality under pole-anchored edge non-sharing]\label{thm:max}
Consider pole-anchored wing triangles of the form $\triangle(S, U_i, \cdot)$ whose edges must be pairwise non-shared.
Then at most five such triangles can be anchored at $S$. Likewise, at most five can be anchored at $N$.
Consequently, under pole anchoring at both poles and edge non-sharing, a construction can contain at most ten faces.
\end{theorem}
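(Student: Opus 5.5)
The plan is a pigeonhole argument on the edges incident to each pole. The bound should come from the geometry of the pole alone, not from the specific face set $\mathcal{F}$. So I first fix what ``anchored at $S$'' means: a wing triangle anchored at $S$ is a triangle $\triangle(S,X,Y)$ with $X,Y$ vertices of the icosahedron. By Theorem~\ref{thm:golden} and Lemma~\ref{lem:gnomon}, its side lengths are $(\ellE,\ellE,\phiG\ellE)$ with the $36^\circ$ angle at $S$. Since the angle at $S$ is $36^\circ$ and not $108^\circ$, $S$ is an endpoint of the base. Hence exactly one of the two sides at $S$ has length $\ellE$, and that side is an icosahedron edge $SL_j$ for some $j$. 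The other side at $S$ has length $\phiG\ellE$, and by Lemma~\ref{lem:dist} it joins $S$ to an upper-ring vertex $U_i$.

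Next I apply the pigeonhole principle to the edges at $S$. The pole $S$ has degree $5$ in the icosahedron, with neighbours $L_1,\dots,L_5$. Each $S$-anchored triangle uses exactly one of these five edges $SL_j$. By Axiom~\ref{ax:edge}, distinct triangles cannot share the edge $SL_j$. The map sending a triangle to its unique short edge at $S$ is therefore injective into a $5$-element set, so at most five triangles are anchored at $S$. As a cross-check, the long edges $SU_i$ also number five, since $S$ is at distance $\phiG\ellE$ from exactly the five vertices of the opposite ring. Either count gives the bound. The argument for $N$ is the same by the symmetry exchanging $N\leftrightarrow S$ and $U_i\leftrightarrow L_i$.

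Finally, every face in the construction is anchored at $N$ or at $S$. A face cannot be anchored at both, because $|NS|$ is the diameter $2R_{\mathrm{circ}}\neq \ellE,\phiG\ellE$, so $N$ and $S$ are never both vertices of such a triangle. The total is therefore at most $5+5=10$. By Theorem~\ref{thm:noedge}, $\mathcal{F}$ attains this bound, so it is maximal.

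The main obstacle is the first step. The statement only says ``of the form $\triangle(S,U_i,\cdot)$'', so I must justify that every admissible anchored triangle has exactly one short edge incident to the pole. I would do this from the distance spectrum of the icosahedron: the only vertex-to-vertex distances are $\ellE$, $\phiG\ellE$ and the diameter. I would then use Lemma~\ref{lem:gnomon} to rule out the case where the $108^\circ$ vertex sits at the pole. If that classification proves awkward, I would fall back on counting the long edges $SU_i$ directly. Each anchored triangle contains exactly one of them, as its base, and those five edges are likewise protected by Axiom~\ref{ax:edge}.
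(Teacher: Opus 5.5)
Your fallback is exactly the paper's proof, and it is all the statement needs: assign each $S$-anchored triangle to the segment $SU_i$ named in its form $\triangle(S,U_i,\cdot)$, and note that two triangles with the same assignment would share an edge. No classification of the triangle's shape is required.

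The trouble is in the route you present as primary. You get the side lengths $(\ellE,\ellE,\phiG\ellE)$ with $36^\circ$ at $S$ from Theorem~\ref{thm:golden}. But that theorem is about the ten specific faces of $\mathcal{F}$, while a maximality claim is about arbitrary competing constructions, so it cannot be cited there. Your ``main obstacle'' also cannot be cleared from the stated hypothesis using the distance spectrum. For example, $\triangle(S,U_1,U_3)$ is equilateral with side $\phiG\ellE$, and $\triangle(S,U_1,N)$ has $NS$ as a side. Both are of the form $\triangle(S,U_i,\cdot)$, yet neither contains any icosahedron edge $SL_j$, so your injection into the five short edges is not defined on them. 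The short-edge count is valid only if you build ``golden gnomon with its $36^\circ$ angle at the pole'' into the definition of a wing triangle. That is an extra hypothesis the paper's argument never uses.

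The same examples affect your other steps:
\begin{itemize}
\item A triangle of the stated form contains \emph{at least} one segment $SU_i$, not ``exactly one, as its base''; $\triangle(S,U_1,U_3)$ contains two. At least one is all injectivity needs once you fix the designated $U_i$.
\item Your claim that no face is anchored at both poles fails literally for $\triangle(S,U_i,N)$. It is also unnecessary: the union of the $N$-anchored and $S$-anchored families has at most $5+5=10$ members regardless of overlap.
\item The relabeling $N\leftrightarrow S$, $U_i\leftrightarrow L_i$ is not induced by an isometry with the paper's indexing: $U_1$ is adjacent to $L_5$, but $L_1$ is not adjacent to $U_5$. This is harmless, since the count at $N$ is the same pigeonhole on the five segments $NU_i$.
\end{itemize}
Your remark that $\mathcal{F}$ attains the bound, via Theorem~\ref{thm:noedge}, is a correct addition that the paper's proof leaves implicit.
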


\begin{proof}
Any triangle anchored at $S$ using an upper vertex $U_i$ necessarily contains the edge $SU_i$.
Under edge non-sharing, no two such triangles may use the same edge $SU_i$.
Since there are only five distinct upper vertices $U_1,\dots,U_5$, there are only five distinct edges $SU_i$,
hence at most five edge-disjoint pole-anchored triangles can be anchored at $S$.
The same argument holds for $N$.
\end{proof}

\subsection{Equatorial decagon and closed-form radius}

\begin{definition}[Equatorial plane and cross-edge midpoints]\label{def:equator}
Let the equatorial plane be
\[
\Pi:=\{x\in\R^3:(x-O)\cdot (N-S)=0\},
\]
i.e., the plane through $O$ perpendicular to axis $NS$.
For a face $F$, let $c(F)$ denote the edge not incident to the corresponding pole (the ``cross-edge'').
Define the representative point
\[
p(F):=\midpt(c(F)),
\]
the midpoint of the cross-edge.
\end{definition}

\begin{proposition}[Cross-edges for the ten faces]\label{prop:cross}
For the faces in Definition~\ref{def:wingset},
\[
c(F_S(i))=U_iL_i,\qquad c(F_N(i))=U_iL_{i-1}.
\]
\end{proposition}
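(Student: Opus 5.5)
The proposition follows directly from Definition~\ref{def:equator} together with the explicit vertex lists in Definition~\ref{def:wingset}. The plan is to take each face in turn and list its three edges. We then note which pole that face is anchored at, reading the anchoring off Theorem~\ref{thm:golden}: $F_S(i)$ is anchored at $S$ and $F_N(i)$ at $N$. Finally we discard the two edges incident to that pole. The single remaining edge is, by definition, the cross-edge $c(F)$.

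For $F_S(i)=\triangle(S,U_i,L_i)$, the edge set is $\{SU_i,\,SL_i,\,U_iL_i\}$, as already recorded in the proof of Theorem~\ref{thm:noedge}. The anchoring pole is $S$, so the edges incident to it are $SU_i$ and $SL_i$. We must check that $U_iL_i$ is not incident to $S$. This holds because $U_i\neq S$ and $L_i\neq S$, since the labels in Definition~\ref{def:label} are twelve distinct vertices. Hence $c(F_S(i))=U_iL_i$. The north faces work the same way. For $F_N(i)=\triangle(N,U_i,L_{i-1})$, with $L_0:=L_5$, the edges are $\{NU_i,\,NL_{i-1},\,U_iL_{i-1}\}$. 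Removing the two edges incident to $N$ leaves $c(F_N(i))=U_iL_{i-1}$.

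The only point needing care is well-definedness. The phrase ``the edge not incident to the corresponding pole'' presupposes that exactly one edge of each face avoids the pole. That is true precisely because each face has the pole as exactly one of its three vertices. The other two vertices lie in $\{U_1,\dots,U_5,L_1,\dots,L_5\}$, which contains neither $N$ nor $S$. In particular $F_S(i)$ does not contain $N$ and $F_N(i)$ does not contain $S$, so ``the corresponding pole'' is unambiguous.

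It is worth noting, though the proposition does not require it, that Theorem~\ref{thm:golden} gives a consistency check. Each cross-edge has length $\ellE$ and joins the two $36^\circ$/$108^\circ$ vertices other than the pole, so it is one of the two legs rather than the base. I expect no genuine obstacle; the whole argument is bookkeeping with indices modulo~$5$.
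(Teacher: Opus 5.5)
Your proposal is correct and takes the same approach as the paper: list the three edges of each face, discard the two incident to its pole, and the remaining edge is the cross-edge. Your remark on well-definedness is a reasonable tightening that the paper leaves implicit: each face contains exactly one of $N,S$ as a vertex, so ``the corresponding pole'' is unambiguous and exactly one edge avoids it.
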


\begin{proof}
In $F_S(i)=\triangle(S,U_i,L_i)$ the two edges incident to pole $S$ are $SU_i$ and $SL_i$, hence the remaining edge is $U_iL_i$.
The north case is identical.
\end{proof}

\begin{theorem}[Perfect circular balance: regular decagon on $\Pi$]\label{thm:decagon}
The set $\{p(F)\mid F\in\mathcal{F}\}\subset\Pi$ consists of ten points lying on a single circle centered at $O$,
forming a regular decagon with $36^\circ$ angular spacing.
\end{theorem}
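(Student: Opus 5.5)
The plan is to work in cylindrical coordinates adapted to the axis $NS$ and to compute the ten midpoints $p(F)$ explicitly. Place $O$ at the origin with $N$ on the positive $z$-axis. The icosahedron is then the standard pentagonal antiprism capped by two pyramids. The upper ring $U_1,\dots,U_5$ lies in a plane $z=h$ and the lower ring $L_1,\dots,L_5$ in $z=-h$, both on circles of the same radius $r$ about the axis.

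The labeling implicit in Definition~\ref{def:wingset} must make both $U_iL_i$ and $U_iL_{i-1}$ edges, as used in the proof of Theorem~\ref{thm:golden}. I will therefore fix the convention
\[
U_i=(r\cos 72^\circ i,\ r\sin 72^\circ i,\ h),\qquad
L_i=(r\cos(72^\circ i+36^\circ),\ r\sin(72^\circ i+36^\circ),\ -h).
\]
With this choice the two lower neighbours of $U_i$ are $L_i$ and $L_{i-1}$, at angles $72^\circ i\pm 36^\circ$. The symmetry facts I rely on are standard and follow from the coordinate model \eqref{eq:std-icosa}. The rotation by $72^\circ$ about $NS$ is a symmetry of $I$, and the rings are equal regular pentagons of side $\ellE$ lying in the planes $z=\pm h$ and twisted by $36^\circ$. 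Consequently $r=\ellE/(2\sin 36^\circ)$.

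Next I compute the midpoints using Proposition~\ref{prop:cross}. Since $U_i$ and $L_i$ have opposite heights $\pm h$, the midpoint of $U_iL_i$ has $z=0$, so it lies in $\Pi$. Its horizontal part is the midpoint of two points on the circle of radius $r$ separated by the angle $36^\circ$. That midpoint lies on the angle bisector, at angle $72^\circ i+18^\circ$, and at distance $r\cos 18^\circ$ from the axis. The same computation for $U_iL_{i-1}$ places $p(F_N(i))$ in $\Pi$ at angle $72^\circ i-18^\circ$ and distance $r\cos 18^\circ$. The ten points therefore sit at the angles $\{72^\circ i\pm 18^\circ\}$, which is exactly the set $18^\circ+36^\circ k$ for $k=0,\dots,9$. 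These angles are pairwise distinct, so there are ten points, equally spaced by $36^\circ$ on a circle of radius $R=r\cos 18^\circ$ centered at $O$. Hence they form a regular decagon.

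Finally I identify the radius in closed form, which also gives the abstract's formula. Using $\sin 36^\circ=2\sin 18^\circ\cos 18^\circ$,
\[
R=\frac{\ellE\cos 18^\circ}{2\sin 36^\circ}=\frac{\ellE}{4\sin 18^\circ},
\]
and $\sin 18^\circ=\frac{1}{2\phiG}$ then yields $R=\frac{\phiG}{2}\ellE$.

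The main obstacle is the first step. I need to justify rigorously, rather than by appeal to the figure, that the rings are planar pentagons at heights $\pm h$, twisted by exactly $36^\circ$, and indexed so that $U_i\sim L_i$ and $U_i\sim L_{i-1}$. I would settle this by choosing $N=(0,1,\phiG)$ as in Lemma~\ref{lem:dist}. I would list its five neighbours in \eqref{eq:std-icosa} and the five neighbours of $S=-N$, check that their projections onto $N-S$ take two opposite values, and invoke the order-$5$ rotational symmetry about $NS$ to obtain regularity and the $36^\circ$ offset.
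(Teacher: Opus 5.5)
Your proof is correct, and it takes a different route from the paper.

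\textbf{The paper's route.} It argues structurally. The $72^\circ$ rotation about $NS$ cyclically permutes $P_1=\{\midpt(U_iL_i)\}$ and $P_2=\{\midpt(U_iL_{i-1})\}$, so each is a regular pentagon. The two pentagons are then interlaced by a single $36^\circ$ phase offset taken from Proposition~\ref{prop:phase36}, and the radius is computed separately in Theorem~\ref{thm:radius}.

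\textbf{Your route.} You locate all ten points explicitly, at azimuths $18^\circ+36^\circ k$ and common distance $r\cos 18^\circ$ from the axis. Membership in $\Pi$, equal radii, distinctness, the $36^\circ$ spacing and the closed form $R=\frac{\phiG}{2}\ellE$ all come out of one computation.

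\textbf{What your route buys.} The points used in Proposition~\ref{prop:phase36} and Theorem~\ref{thm:radius} are midpoints of edges through $N=(0,1,\phiG)$, not of the cross-edges $U_iL_i$ and $U_iL_{i-1}$. The paper's route therefore tacitly needs the icosahedral group to be transitive on incident vertex--face pairs, in order to transfer that angle and that norm to the actual points $p(F)$. It also places the midpoints in $\Pi$ by a loose symmetry appeal, although reflection in $\Pi$ is not a symmetry of $I$. Your heights $\pm h$ make both points immediate.

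\textbf{What it costs.} You must first establish the antiprism structure and the indexing, which the paper leaves to the figure. Your plan for this works, with one correction. The order-$5$ rotation about $NS$ makes each ring a regular pentagon, but it cannot produce the $36^\circ$ twist, because it preserves any relative twist between the rings. The twist comes from central symmetry: $S=-N$ and the vertex set is invariant under $v\mapsto -v$. So the lower ring is the negative of the upper ring, and $180^\circ\equiv 36^\circ\pmod{72^\circ}$ gives the offset. Concretely, the neighbours of $N$ are $(\pm1,\phiG,0)$, $(0,-1,\phiG)$ and $(\pm\phiG,0,1)$, all with $v\cdot N=\phiG$; this is exactly the check you propose.
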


\begin{proof}
By Proposition~\ref{prop:cross}, each $p(F)$ is the midpoint of a $U$--$L$ edge.
By symmetry about the axis and the central plane through $O$ orthogonal to $NS$, each such midpoint lies in $\Pi$,
so $p(F)\in\Pi$.

Next, consider the two midpoint sets
\[
P_1=\{\midpt(U_iL_i)\}_{i=1}^5,\qquad P_2=\{\midpt(U_iL_{i-1})\}_{i=1}^5.
\]
Rotation by $72^\circ$ about axis $NS$ maps $U_i\mapsto U_{i+1}$ and $L_i\mapsto L_{i+1}$,
so it cyclically permutes $P_1$ and also $P_2$. Hence each of $P_1$ and $P_2$ forms a regular pentagon on a circle
centered at $O$ in $\Pi$.

To show that $P_1\cup P_2$ is a regular decagon, it suffices to show a $36^\circ$ phase offset between one point in $P_1$
and a neighboring point in $P_2$. This is established in Proposition~\ref{prop:phase36}. Therefore the two pentagons
interlace into a regular decagon with $36^\circ$ spacing.
\end{proof}

\begin{proposition}[Phase offset: $\cos 36^\circ=\phiG/2$]\label{prop:phase36}
In the standard model \eqref{eq:std-icosa} (edge length $2$), let
\[
M=\midpt\big((0,1,\phiG),(1,\phiG,0)\big),\qquad
M'=\midpt\big((0,1,\phiG),(-1,\phiG,0)\big).
\]
Then $\ang MOM'=36^\circ$ and
\[
\frac{M\cdot M'}{\norm{M}\,\norm{M'}}=\frac{\phiG}{2}.
\]
\end{proposition}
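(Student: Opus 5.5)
The plan is a direct coordinate computation in the standard model \eqref{eq:std-icosa}. The center $O$ is the origin there, so $\ang MOM'$ is simply the angle between the position vectors $M$ and $M'$. I will compute the two midpoints explicitly, then their inner product and norms, reduce everything with $\phiG^2=\phiG+1$, and finally identify the resulting cosine with $\cos 36^\circ$.

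\textbf{Midpoints.} Averaging the coordinates gives
\[
M=\Big(\tfrac12,\tfrac{1+\phiG}{2},\tfrac{\phiG}{2}\Big),\qquad
M'=\Big(-\tfrac12,\tfrac{1+\phiG}{2},\tfrac{\phiG}{2}\Big),
\]
so the two points differ only in the sign of the first coordinate. This makes $\norm{M}=\norm{M'}$ immediate.

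\textbf{Power reductions.} I will use the golden-ratio power identities
\[
(1+\phiG)^2=\phiG^4=3\phiG+2,\qquad \phiG^3=2\phiG+1,
\]
both of which follow from repeated use of $\phiG^2=\phiG+1$.

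\textbf{Dot product and norms.} With these identities,
\[
M\cdot M'=\tfrac14\big(-1+(3\phiG+2)+(\phiG+1)\big)=\phiG+\tfrac12,
\]
\[
\norm{M}^2=\tfrac14\big(1+(3\phiG+2)+(\phiG+1)\big)=\phiG+1=\phiG^2 .
\]
Hence $\norm{M}=\norm{M'}=\phiG$, and
\[
\frac{M\cdot M'}{\norm{M}\,\norm{M'}}=\frac{2\phiG+1}{2\phiG^2}=\frac{\phiG^3}{2\phiG^2}=\frac{\phiG}{2},
\]
which is the displayed identity in the statement.

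\textbf{Identifying the angle.} It remains to show that $\phiG/2=\tfrac{1+\sqrt5}{4}$ equals $\cos36^\circ$. I would derive this from Lemma~\ref{lem:gnomon}, which gives $\cos 108^\circ=\tfrac{1-\phiG}{2}$. Since $\cos 72^\circ=-\cos108^\circ=\tfrac{\phiG-1}{2}$, the double-angle formula $\cos72^\circ=2\cos^2 36^\circ-1$ yields
\[
\cos^2 36^\circ=\frac{\phiG+1}{4}=\frac{\phiG^2}{4}.
\]
Taking the positive root gives $\cos36^\circ=\phiG/2$. Because $\arccos$ is injective on $[0^\circ,180^\circ]$, it follows that $\ang MOM'=36^\circ$.

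The main obstacle is bookkeeping rather than any conceptual step: keeping the powers of $\phiG$ straight so that the ratio collapses cleanly to $\phiG^3/(2\phiG^2)$.
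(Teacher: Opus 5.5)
Your proposal is correct and is essentially the paper's proof. You use the same explicit midpoints, reduce $M\cdot M'$ to $\phiG+\tfrac12$, and reduce the cosine to $\phiG^3/(2\phiG^2)=\phiG/2$ via $\phiG^2=\phiG+1$. The only differences are small refinements: you compute $\norm{M}=\phiG$ directly instead of forward-citing Theorem~\ref{thm:radius}, and you justify $\cos 36^\circ=\phiG/2$ with the double-angle formula from the $\cos 72^\circ$ value underlying Lemma~\ref{lem:gnomon}, where the paper simply asserts it.
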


\begin{proof}
Compute
\[
M=\left(\frac12,\frac{1+\phiG}{2},\frac{\phiG}{2}\right),\qquad
M'=\left(-\frac12,\frac{1+\phiG}{2},\frac{\phiG}{2}\right).
\]
Their dot product is
\[
M\cdot M'=-\frac14+\left(\frac{1+\phiG}{2}\right)^2+\left(\frac{\phiG}{2}\right)^2.
\]
Using $\phiG^2=\phiG+1$,
\[
\left(\frac{1+\phiG}{2}\right)^2=\frac{1+2\phiG+\phiG^2}{4}=\frac{2+3\phiG}{4},\qquad
\left(\frac{\phiG}{2}\right)^2=\frac{\phiG^2}{4}=\frac{\phiG+1}{4}.
\]
Therefore
\[
M\cdot M'=-\frac14+\frac{2+3\phiG}{4}+\frac{\phiG+1}{4}=\frac{2+4\phiG}{4}=\frac{1+2\phiG}{2}.
\]
Meanwhile, $\norm{M}=\norm{M'}=\phiG$ (see Theorem~\ref{thm:radius}), so $\norm{M}\norm{M'}=\phiG^2$.
Thus
\[
\frac{M\cdot M'}{\norm{M}\norm{M'}}=\frac{(1+2\phiG)/2}{\phiG^2}=\frac{\phiG}{2},
\]
which equals $\cos 36^\circ$. Hence $\ang MOM'=36^\circ$.
\end{proof}

\begin{theorem}[Closed-form radius $R=\frac{\phiG}{2}\ellE$]\label{thm:radius}
Let $R$ be the radius of the circle in Theorem~\ref{thm:decagon}. Then
\[
\boxed{R=\frac{\phiG}{2}\,\ellE}.
\]
\end{theorem}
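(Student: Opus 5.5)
The plan is to compute $R$ directly as the distance from the center $O$ to one representative point $p(F)$. The argument must not rely on Proposition~\ref{prop:phase36}, because that proposition itself cites the present theorem for $\norm{M}=\phiG$. By Theorem~\ref{thm:decagon}, all ten points $p(F)$ lie on a circle centered at $O$ in $\Pi$, so $R=\norm{p(F)-O}$ for any single $F\in\mathcal{F}$. By Proposition~\ref{prop:cross}, $p(F)$ is the midpoint of an edge $U_iL_i$ (or $U_iL_{i-1}$) of the icosahedron. It therefore suffices to show that the midpoint of every edge of the regular icosahedron of edge length $\ellE$ lies at distance $\frac{\phiG}{2}\ellE$ from the center.

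First I will work in the standard model \eqref{eq:std-icosa} with edge length $2$, whose center $O$ is the origin. I will take the edge $AB$ with $A=(0,1,\phiG)$ and $B=(1,\phiG,0)$, which is an edge by Lemma~\ref{lem:dist}(i). Its midpoint is $M=\bigl(\tfrac12,\tfrac{1+\phiG}{2},\tfrac{\phiG}{2}\bigr)$. Then
\[
\norm{M}^2=\frac{1+(1+\phiG)^2+\phiG^2}{4}.
\]
Using $\phiG^2=\phiG+1$, this reduces to $\frac{4+4\phiG}{4}=1+\phiG=\phiG^2$, so $\norm{M}=\phiG$.

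Second, I will transfer this from one edge to all edges. The rotation group of the regular icosahedron acts transitively on its $30$ edges and fixes $O$. Hence every edge midpoint, in particular $\midpt(U_iL_i)$ and $\midpt(U_iL_{i-1})$ for every labeling with axis $NS$, lies at distance $\phiG$ from $O$. If one prefers to avoid quoting edge-transitivity, there is a direct alternative. For any edge $XY$ with $\norm{X}=\norm{Y}=\rho$ (the circumradius) and $\norm{X-Y}=2$, the parallelogram identity gives
\[
\norm{\tfrac{X+Y}{2}}^2=\rho^2-1.
\]
Here every vertex in \eqref{eq:std-icosa} has $\rho^2=1+\phiG^2$, so $\norm{\tfrac{X+Y}{2}}^2=\phiG^2$, consistent with the computation above.

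Finally, scaling the model by $\ellE/2$ multiplies all distances from $O$ by $\ellE/2$, which gives $R=\frac{\phiG}{2}\ellE$. The main point requiring care is not the algebra but the bookkeeping. The representative edge in the coordinate model should be recognized as an arbitrary edge (via the parallelogram identity, which uses only $\norm{X-Y}$ and the common circumradius), not specifically a $U$--$L$ cross-edge. This is what makes the computation independent of the particular choice of poles.
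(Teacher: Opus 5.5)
Your proposal is correct and is essentially the paper's proof. It uses the same edge from $(0,1,\phiG)$ to $(1,\phiG,0)$, the same computation $\norm{M}^2=1+\phiG=\phiG^2$ in the edge-length-$2$ model, and the same rescaling by $\ellE/2$. Your extra transfer step (edge-transitivity, or the parallelogram identity giving $\norm{\tfrac{X+Y}{2}}^2=\rho^2-1=\phiG^2$ for every edge) usefully makes explicit what the paper leaves tacit: with the poles of Lemma~\ref{lem:dist}, the paper's chosen edge is an $N$--$U$ edge rather than a cross-edge.
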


\begin{proof}
In the standard model \eqref{eq:std-icosa} (edge length $2$), consider adjacent vertices
$A=(0,1,\phiG)$ and $B=(1,\phiG,0)$. Their midpoint is
\[
M=\left(\frac12,\ \frac{1+\phiG}{2},\ \frac{\phiG}{2}\right).
\]
Using $\phiG^2=\phiG+1$,
\[
\norm{M}^2=\left(\frac12\right)^2+\left(\frac{1+\phiG}{2}\right)^2+\left(\frac{\phiG}{2}\right)^2=\phiG^2,
\]
so $\norm{M}=\phiG$. Hence the midpoint circle in the edge-length-$2$ model has radius $\phiG$.
Scaling to edge length $\ellE$ multiplies distances by $\ellE/2$, giving $R=\phiG(\ellE/2)=\frac{\phiG}{2}\ellE$.
\end{proof}

\begin{corollary}
If $\ellE=1\,\mathrm{m}$, then
\[
R=\frac{\phiG}{2}\ \mathrm{m}\approx 0.809016994\ \mathrm{m}.
\]
\end{corollary}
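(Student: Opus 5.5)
The corollary is a direct specialization of Theorem~\ref{thm:radius}, so the plan is simply to substitute and then evaluate numerically. Theorem~\ref{thm:radius} gives $R=\frac{\phiG}{2}\ellE$ for the radius of the decagon circle of Theorem~\ref{thm:decagon}. This relation is linear and homogeneous in $\ellE$: the proof there scales the edge-length-$2$ model by $\ellE/2$, and lengths scale uniformly. We may therefore attach physical units to $\ellE$. Setting $\ellE=1\,\mathrm{m}$ gives $R=\frac{\phiG}{2}\,\mathrm{m}$ exactly, with no further geometric input.

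For the numerical value, write $\frac{\phiG}{2}=\frac{1+\sqrt5}{4}$. We have $\sqrt5\approx 2.2360679775$, so $1+\sqrt5\approx 3.2360679775$ and the quotient by $4$ is $\approx 0.8090169944$. Truncated, this is $0.809016994$. As a sanity check, this value should equal $\cos 36^\circ$, which is consistent with Proposition~\ref{prop:phase36}, where $\frac{\phiG}{2}=\cos 36^\circ$ appears.

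There is no real obstacle here. The only point needing care is that the stated approximation is correctly rounded to nine decimals; the tenth digit is $4$, so truncation and rounding agree.
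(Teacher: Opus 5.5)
Your proposal is correct and matches the paper, which treats the corollary as an immediate specialization of Theorem~\ref{thm:radius} at $\ellE=1\,\mathrm{m}$ and gives no further argument. One small numerical slip: since $\frac{1+\sqrt5}{4}=0.80901699437\ldots$, the true tenth decimal is $3$. The $4$ you cite is the last digit of your already-rounded ten-place value, but your conclusion that truncation and rounding to nine places agree is unaffected.
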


Figure~\ref{fig:decagon} illustrates the equatorial plane $\Pi$ and the regular decagon formed by the
representative points $p(F)$.

\begin{figure}[H]
  \centering
  \includegraphics[width=0.6\linewidth]{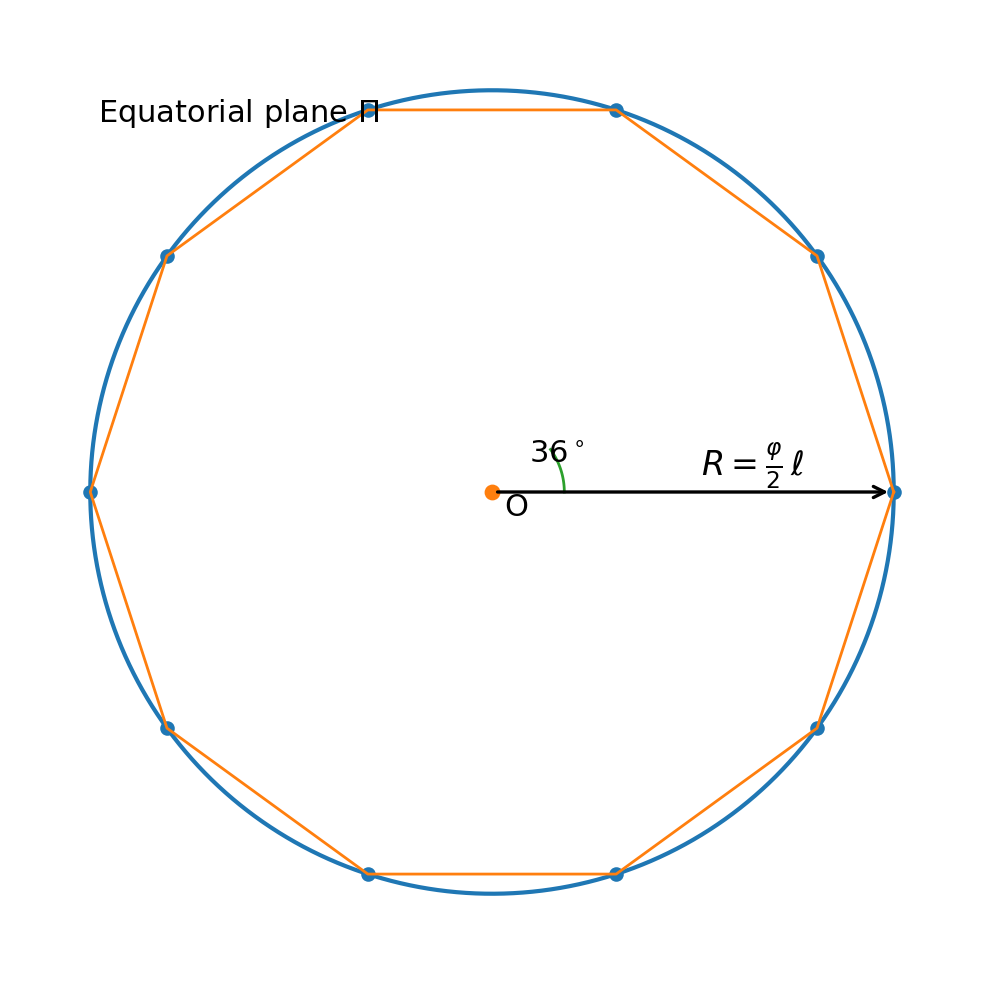}
  \caption{Equatorial plane $\Pi$ (perpendicular to axis $NS$ through center $O$).
  The ten representative points $p(F)$ (midpoints of the pole-opposite edges) lie on a circle
  centered at $O$ and form a regular decagon of radius $R=\frac{\phiG}{2}\ellE$.}
  \label{fig:decagon}
\end{figure}

\FloatBarrier

\section{Design Interpretation: Why ``Ten''?}
The number ten is not merely a count; it encodes a closure condition consistent with the icosahedron's
fivefold rotational symmetry about the pole axis.
On each pole, there are exactly five distinct edges $SU_i$ (and $NU_i$). Under edge non-sharing, these edges
act as five ``ports'' for pole-anchored faces (Theorem~\ref{thm:max}).
The chosen index shift between south and north faces prevents edge duplication while interlacing the
two pentagonal midpoint sets into a decagon (Theorem~\ref{thm:decagon}).

From a design standpoint, the equatorial decagon provides a compact diagnostic:
if a CAD model implements the ten faces correctly, the midpoints of the pole-opposite edges must lie on a
single circle and fall into a regular decagon with radius $\frac{\phiG}{2}\ellE$.
This offers a simple geometric ``sanity check'' independent of aerodynamic modeling.

\section{Reproducible Construction Workflow (CAD/Parametric)}
We summarize a practical procedure to generate and validate $\mathcal{F}$.

\subsection*{Algorithm 1 (Wing-face generation and validation)}
\begin{enumerate}[leftmargin=1.8em]
\item \textbf{Create a labeled icosahedron.} Fix edge length $\ellE$ and label vertices as in Definition~\ref{def:label}.
\item \textbf{Generate faces.} Create $F_S(i)=\triangle(S,U_i,L_i)$ and $F_N(i)=\triangle(N,U_i,L_{i-1})$ for $i=1,\dots,5$.
\item \textbf{Edge non-sharing check.} List all edges of the ten triangles (unordered vertex pairs) and verify no duplicates.
\item \textbf{Angle check.} Verify side lengths $(\ellE,\ellE,\phiG\ellE)$ and angles $(36^\circ,36^\circ,108^\circ)$ for each face.
\item \textbf{Equatorial decagon check.} Compute $p(F)$ for all faces. Confirm decagon geometry and radius $R=\frac{\phiG}{2}\ellE$.
\item \textbf{Non-intersection check (optional).} Perform triangle--triangle intersection tests to numerically verify Axiom~\ref{ax:nonintersect}.
\end{enumerate}

\section{Related Geometric Context}
The appearance of $\phiG$ in the icosahedron is classical: coordinate realizations, vertex/edge relations,
and duality with the dodecahedron all involve the golden ratio.
The present contribution is a specific edge-disjoint, pole-anchored ten-face rule-set whose equatorial midpoints
close into a decagon with a simple closed-form radius.
For general background, see Coxeter and Gr\"unbaum.

\section{Conclusion}
We defined a pole-anchored ten-face wing set on a regular icosahedron and proved:
(i) each face is a $36^\circ\!-\!36^\circ\!-\!108^\circ$ isosceles (golden) triangle;
(ii) faces share no edges and at most ten such pole-anchored edge-disjoint faces can exist across both poles; and
(iii) the pole-opposite edge midpoints form a regular decagon on the equatorial plane with radius $R=\frac{\phiG}{2}\ellE$.
These results provide a mathematical foundation and a practical validation workflow for geometry-driven design implementations.

\bibliographystyle{plain}

\end{document}